\newtheorem{theorem}{Theorem}
\newtheorem{lemma}[theorem]{Lemma}
\newtheorem{question}[theorem]{Question}
\newenvironment{proof}{\noindent{\bf Proof.}\ }{$\bullet$\medskip\par}
\title{\bf Non vanishing of theta functions
and sets of small multiplicative energy}
\author{Marc Munsch \\
5010 Institut f\"{u}r Analysis und Zahlentheorie \\
8010 Graz, Steyrergasse 30, Graz, Austria \\
munsch@math.tugraz.at}
\date{\today}
\begin{document}
\bibliographystyle{alpha}
\maketitle

\footnotetext{
2010 Mathematics Subject Classification. 
Primary 11M99, 11N37. Secondary : 05C42, 05D05, 11B30, 11L40 \\
Key words and phrases. 
Dirichlet characters, theta functions, mollifiers, GCD sums, multiplicative energy.}

\begin{abstract}
Let $\chi$ range over the $(p-1)/2$ even Dirichlet characters modulo a prime $p$ and denote by $\theta (x,\chi)$ the associated theta series. The asymptotic behaviour of the second and fourth moments proved by Louboutin and the author implies that there exists at least $ \gg p/ \log p$ characters such that the associated theta function does not vanish at a fixed point. Constructing a suitable mollifier, we improve this result and show that there exists at least $ \gg p/ \sqrt{\log p}$ characters such that $\theta(x,\chi) \neq 0$ for any $x>0$.
We give similar results for odd Dirichlet characters mod $p$.
\end{abstract}

\bibliographystyle{plain}
\maketitle

\section{Introduction}
\hspace{\parindent} The distribution of values of $L$- functions is a deep question in number theory which has various important repercussions for the related attached arithmetic, algebraic and geometric objects. It is certainly of great importance in number theory. The main reason comes from the fact that these values and particularly the central ones hold a lot of fundamental arithmetical information, as illustrated for example by the famous Birch and Swinnerton-Dyer Conjecture conjecture (\cite{BSD1,BSD2}). It is widely believed that they should not vanish unless there is an underlying arithmetic reason forcing it and this should occur very rarely when considered inside suitable families. Consider the Dirichlet $L$- functions associated to Dirichlet characters $$L(s,\chi):=\sum_{n \geq 1} \frac{\chi(n)}{n^s} \hspace{5mm} \Re(s) > 1.$$ In this case, there exists no algebraic reason forcing the $L$- function to vanish at $s=1/2$. Therefore it is certainly expected that $L(1/2) \neq 0$ as firstly conjectured by Chowla \cite{Chowla} for quadratic characters. In the last century, the notion of family of $L$- functions has been important  both as a heuristic guide to understand or guess many important statistical properties of $L$-functions. One of the main analytic tools is the study of moments and various authors have obtained results on the mean value of these $L$-series at their central point $s=1/2$. The asymptotic of the first two moments (as well as the fourth moment) is known: 
$$\sum_{\chi\bmod p}
\vert L(1/2,\chi )\vert^{2}
\sim p\log p$$
(see \cite[Remark 3]{Rama}, 
\cite[Th. 3]{Bal}, 
or \cite{HB81a} for a more precise asymptotic expansion). This imply directly that there is a reasonable proportion of characters such that the $L$- function does not vanish at the central point. \\

  Using the method of mollifiers, it was first proved by Balasubramanian and Murty \cite{BalaMurty} that there exists a positive proportion of characters such that the $L$- function does not vanish at $s=1/2$. Their result was improved and greatly simplified by Iwaniec and Sarnak \cite{ISoriginal} enabling them to derive similar results for families of automorphic $L$-functions \cite{ISauto}. Since then, a lot of technical improvements and generalizations have been carried out, see for instance \cite{Bui,Khan,Soundquad}.\\

As initiated in previous works \cite{LM,Debrecen,thetalow,thetaupp}, we would like to obtain similar results for moments of values at $x=1$ 
of the theta functions $\theta (x,\chi )$ associated with such Dirichlet $L$-functions. It was conjectured in \cite{efficient} that $\theta(1,\chi) \neq 0$ for every primitive character (see \cite{Zagier} for a case of vanishing in the non-primitive case). Using the computation of the first two moments of these theta functions at the central point $x=1$, Louboutin and the author \cite{LM} obtained that $\theta (1,\chi)\neq 0$ for at least $p/\log p$ even characters modulo $p$ (for odd characters, it was already proven by Louboutin in \cite{CRAS}). As in the case of $L$- functions, we would like to obtain a positive proportion of such characters. However, one backdraw in this situation is that the theta function does not have a representation as an Euler product which suggested the construction of mollifiers for Dirichlet $L$- functions. Thus, we need to proceed somehow differently in order to construct the mollifiers. 
Our goal in this note is to provide an argument which does not produce a positive proportion but improve the result coming from the evaluation of the second and fourth moments. Precisely, we prove 
\begin{theorem}\label{mainth}
Let $x>0$. For all sufficiently large prime $p$, there exists at least $\gg p/ \sqrt{\log p}$ even characters $\chi$ such that $\theta(x,\chi) \neq 0$.
\end{theorem}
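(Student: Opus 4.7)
The plan is to apply the Cauchy--Schwarz inequality to a mollified first moment. Fix a set $A\subset\{1,\ldots,\lfloor\sqrt{p/(4x)}\rfloor\}$ of integers coprime to $p$ (to be chosen later), and set
$$M(\chi)=\sum_{a\in A}\overline{\chi(a)}.$$
With $N(x,p)$ denoting the quantity in the theorem and all $\chi$-sums below taken over the even Dirichlet characters modulo $p$, the Cauchy--Schwarz inequality gives
$$N(x,p)\;\geq\;\frac{\left|\sum_{\chi}\theta(x,\chi)M(\chi)\right|^{2}}{\sum_{\chi}\left|\theta(x,\chi)M(\chi)\right|^{2}},$$
so the task reduces to lower-bounding the numerator and upper-bounding the denominator.

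For the numerator, inserting the series for $\theta$ and using the orthogonality relation $\sum_{\chi}\chi(n)\overline{\chi(a)}=\frac{p-1}{2}$ when $n\equiv\pm a\pmod p$ (and $0$ otherwise) leaves only integer pairs with $n\equiv\pm a\pmod p$; since $A\subset[1,\sqrt{p/(4x)}]$, the Gaussian weight $e^{-\pi xn^{2}/p}$ isolates the diagonal $n=a$ and produces
$$\sum_{\chi}\theta(x,\chi)M(\chi)\;\gg_{x}\;p\,|A|.$$
For the denominator, expanding $|\theta(x,\chi)M(\chi)|^{2}$ and applying orthogonality once more gives a sum over quadruples $(a,b,m,n)\in A^{2}\times\mathbb{Z}_{\geq 1}^{2}$ with $am\equiv\pm bn\pmod p$. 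The range restriction on $A$ forces $am,bn\ll p$, so all such congruences collapse to the honest equation $am=bn$ (off-diagonal residue classes contribute a negligible correction). Parametrising its solutions by $d=\gcd(a,b)$, $m=(b/d)k$, $n=(a/d)k$, and evaluating $\sum_{k\geq 1}e^{-\pi xk^{2}(a^{2}+b^{2})/(pd^{2})}$ by Poisson summation yields
$$\sum_{\chi}\left|\theta(x,\chi)M(\chi)\right|^{2}\;\ll_{x}\;p^{3/2}\sum_{a,b\in A}\frac{\gcd(a,b)}{\sqrt{a^{2}+b^{2}}}.$$

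Combining these estimates gives $N(x,p)\gg_{x}\sqrt{p}\cdot|A|^{2}/G(A)$, where $G(A)$ denotes the GCD sum above. To reach the target $p/\sqrt{\log p}$ one needs $|A|\asymp\sqrt{p}$ together with $G(A)\ll|A|\sqrt{\log|A|}$. The naive choices --- $A=\{1,\ldots,\lfloor\sqrt{p}\rfloor\}$, which yields $G(A)\asymp|A|\log|A|$, and $A$ equal to the set of primes up to $\sqrt{p}$, which yields $G(A)\asymp|A|$ but only $|A|\asymp\sqrt{p}/\log p$ --- both recover merely the bound $\gg p/\log p$ already accessible from the second and fourth moments. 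The main obstacle, and where the phrase \emph{sets of small multiplicative energy} from the title enters, is therefore to exhibit a set $A$ of near-maximal size whose multiplicative coincidences $am=bn$ are sufficiently rare that its GCD sum saves the full $\sqrt{\log|A|}$ factor over the trivial bound; constructing such $A$ and proving the requisite GCD-sum estimate for it is the core technical content of the proof.
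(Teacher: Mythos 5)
Your framework --- the mollifier $M(\chi)=\sum_{a\in A}\overline{\chi(a)}$, the Cauchy--Schwarz inequality, the lower bound $\gg p\,|A|$ for the first mollified moment, and the reduction of the second mollified moment to a GCD sum of the shape $p^{3/2}\sum_{a,b\in A}\gcd(a,b)/\max(a,b)$ --- is exactly the paper's strategy (the paper controls the off-diagonal residue classes $am\equiv\pm bn\pmod p$ via a counting lemma of Ayyad--Cochrane--Zheng rather than dismissing them as negligible, but the dominant term is indeed the GCD sum, so this part is essentially right). The problem is that you stop precisely where the proof begins. You assert that one needs $|A|\asymp\sqrt p$ together with $G(A)\ll|A|\sqrt{\log|A|}$ and declare the construction of such a set to be ``the core technical content.'' But no such set is known to exist: whether a set of (near-)positive density can have GCD sum, equivalently multiplicative energy, that small is essentially one of the open questions the paper itself poses in its final section. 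So the sufficient condition you single out is not the one that gets used, and leaving the construction unsupplied is a genuine gap rather than a deferrable technicality.

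What the paper actually does is move to a different point on the trade-off curve: it takes $A$ to be the integers in $[1,\sqrt p]$ with no prime factor below $y=\exp(\sqrt{\log p})$. By Brun's sieve this set has density $\alpha\sim 1/\sqrt{\log p}$, and the key structural observation (if $m\nmid n$ and both are free of prime factors $\le y$, then $\gcd(m,n)<N/y$) combined with upper-bound sieves shows $G(A)\ll\max\{|A|,\,N\log N\,\alpha^3\}$ with $N=\sqrt p$, hence $|A|^2/G(A)\gg\min\{N\alpha,\,N/(\alpha\log N)\}$, which is optimized at $\alpha=1/\sqrt{\log N}$ and yields the bound $p/\sqrt{\log p}$. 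In other words, the winning set is \emph{not} of near-maximal size with a slightly improved GCD sum; it is a set of density $1/\sqrt{\log p}$ whose GCD sum is $\ll|A|$, i.e.\ optimally small. Your proposal would be complete only after supplying this (or another) explicit construction together with the sieve estimates bounding its GCD sum.
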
 Theorem \ref{mainth} follows from a classical application of the Cauchy-Schwarz inequality: \begin{equation}\label{Cauchy}
M_1(p)^2 \leq \left(\sum_{\chi \bmod p, \chi(-1)=1 \atop \theta(x,\chi) \neq 0} 1 \right) M_2(p) 
 \end{equation} where 
we consider some mollified moments
$$
M_1(p) :=\sum_{\chi\bmod p \atop \chi(-1)=1}M(\chi)\theta (x,\chi) \textrm{   and   } M_2(p):= \sum_{\chi\bmod p \atop \chi(-1)=1}\vert M(\chi)\theta (x,\chi)\vert^2. $$ The mollifier $M(\chi)$ will be chosen as a suitable Dirichlet polynomial builded on a multiplicative subset of integers. The main idea is of combinatorial nature and relies on minimizing some GCD sums or equivalently the multiplicative energy of some well-chosen set. \\

In Section \ref{Def}, we review some previous work as well as introduce the mollifiers. In Section \ref{GCD} and \ref{goodset}, we discuss the reduction of the problem to GCD sums which help us to prove Theorem \ref{mainth} in Section \ref{ProofTh}. Section \ref{odd} concerns the case of odd characters and in Section \ref{combiproblems} we address some open questions regarding the combinatorial problem which may lead to improve our main theorem.

\section{Definitions and previous results}\label{Def}
Let us first restrict ourselves to the case of even Dirichlet characters, we refer the reader to Section \ref{odd} for the case of odd Dirichlet characters.
Let $X_p^+$ be the subgroup of order $(p-1)/2$ of the even Dirichlet characters mod $p$.
We set 
$$\theta (x,\chi)
=\sum_{n\geq 1} \chi (n)e^{-\pi n^2x/p}, 
\ \ \ \chi\in X_p^+.$$ 
For $\chi\neq 1$ we have
$$\theta (x,\chi ) 
={W_\chi\over\sqrt x}\theta (1/x,\bar\chi )
\ \ \ \ (x>0)$$ 
for some explicit complex number $W_\chi$ of absolute value equal to one.
(e.g. see \cite[Chapter 9]{Dav}). 
In particular, if $\theta (1,\chi)\neq 0$, 
then we can efficiently compute numerical approximations to 
$W_\chi =\theta (1,\chi)/\overline{\theta (1,\chi)}$ 
(see \cite{efficient} for an application). 
In order to prove that $\theta (1,\chi)\neq 0$ for many of the $\chi\in X_p^+$, one may proceed as usual and study the behavior of the moments of these theta functions at the central point $x=1$ 
of their functional equations:
$$S_{2k}^+(p) :=\sum_{\chi\in X_p^+}\vert\theta (1,\chi)\vert^{2k}, \hspace{10mm} k > 0.$$ 

Using the computation of the second and fourth moment, it was proved in \cite{LM} that $\theta (1,\chi)\neq 0$ for at least $\gg p/\log p$ of the $\chi\in X_p^+$. Lower bounds of good expected order for the moments were obtained in \cite{thetalow} as well as nearly optimal upper bounds conditionally on GRH in \cite{thetaupp}. This can be related to recent results of~\cite{HarperMaks}, where the authors obtain the asymptotic behaviour of moments of Steinhaus random multiplicative function (a multiplicative random variable whose values at prime integers are uniformly distributed on the complex unit circle). This can reasonably be viewed as a random model for $\theta(1,\chi)$. Indeed, the rapidly decaying factor $e^{-\pi n^2/q}$ is mostly equivalent to restrict the sum over integers $n \le n_0(q)$ for some $n_0(q) \approx \sqrt{q}$ and the averaging behavior of $\chi(n)$ with $n\ll q^{1/2}$ is essentially similar to that of a Steinhaus random multiplicative function. As noticed by Harper, Nikeghbali and Radziwill in \cite{HarperMaks}, an asymptotic formula for the first absolute moment $\displaystyle{\sum_{\chi \in X_p^+} \vert \theta(1,\chi)\vert}$ would probably implies the existence of a positive proportion of characters such that $\theta(1,\chi) \neq 0$. Though, quite surprisingly,  Harper proved recently both in the random and deterministic case that the first moment exhibits unexpectedly more than square-root cancellation \cite{Harpelow,Harperhigh}
$$\frac{1}{p-1} \sum_{\chi} \left\vert \sum_{n\leq N} \chi(n)\right\vert \ll \frac{\sqrt{N}}{\min\left\{\log \log N, (\log \log \log q)^{1/4}\right\}}.$$ This shows that this approach would not in any case provide the existence of a positive proportion of ``good" characters. In this note, we adapt another approach in order to improve on our previous results. 

\vspace{2mm} 
Let $M$ be a parameter which will be fixed later. For any even character $\chi$, let us define 
 \begin{equation}\label{weight} M(\chi)=\sum_{m\leq M}c_m \bar\chi(m)\end{equation} where $c_m$ denotes the indicator function of some multiplicative set of integers $\mathcal{A}$, meaning that $m,n \in \mathcal{A}$ implies $mn\in \mathcal{A}$. We consider the first and second mollified moments 
\begin{equation}\label{mollif}
M_1(p) :=\sum_{\chi\in X_p^+}M(\chi)\theta (x,\chi) \textrm{   and   } M_2(p):= \sum_{\chi\in X_p^+}\vert M(\chi)\theta (x,\chi)\vert^2.
\end{equation} 
While it seems plausible that we could obtain in some cases precise asymptotical formulas, we only give bounds in order to simplify the presentation.  The main technical result is to suitable choose a set ${\cal A}$
giving simultaneously a good lower bound for $M_1(p)$ and a good upper bound for $M_2(p)$.
Precisely, for a suitable choice of $\mathcal{A}$, we have asymptotically
\begin{equation}\label{boundsmolli}
M_1(p) \gg \frac{p^{3/2}}{\sqrt{\log p}} \textrm{   and   } M_2(p) \ll \frac{p^2}{\sqrt{\log p}}.
\end{equation} As already noticed, Theorem \ref{mainth} follows from Cauchy-Schwarz inequality (\ref{Cauchy}) combined with (\ref{boundsmolli}). In the next sections, we will explain how to construct a good set which verifies (\ref{boundsmolli}) and address a related combinatorial problem which may lead to an improvement of our result.

\section{Reduction of the problem to GCD sums}\label{GCD}
\subsection{Lower bound on the first mollified moment}

Let us recall the classical orthogonality relations for the subgroup of Dirichlet even characters $X_p^+$:
$$\sum_{\chi\in X_p^+}\chi (m)\bar\chi (n)
=\cases{
(p-1)/2&if $m\equiv\pm n\bmod p$ and $\gcd (m,p)=1$,\cr
0&otherwise.\cr}$$
Due to the fast decay of the exponential term in $\theta(x,\chi)$, the main contribution to $M_1(p)$ comes from the terms less than $\sqrt{p}$, leading us to choose $M=\sqrt{p}$. 
It follows that 

\begin{eqnarray}\label{lowM1}
M_1(p) &=& \sum_{\chi\in X_p^+} \sum_{m\leq \sqrt{p}} \bar\chi (m)c_m \sum_{n\geq 1}\chi(n)e^{-\pi n^2x/p} \nonumber \\
&=& \frac{p-1}{2}\sum_{m\leq \sqrt{p}}c_m e^{-\pi m^2x/p} \nonumber \\
& \gg & p \sum_{m\leq \sqrt{p}} c_m 
\end{eqnarray} where we used the fact that $e^{-\pi m^2x/p} \gg 1$ for integers less than $\sqrt{p}$. 

The problem boils down to choose a subset of integers $\mathcal{A}$ of sufficiently high density in order to maximize $M_1(p)$ with the condition that it minimizes the second mollifier. We remark from the above inequality (\ref{lowM1}) that 
\begin{equation}\label{lowfirst} M_1(p) \gg p\vert\mathcal{A}\cap \left[1,M\right] \vert \end{equation} where $\vert\mathcal{A}\cap \left[1,M\right] \vert$ denotes the number of elements less than $M$ in $\mathcal{A}$. By an abuse of notation, we will use $\vert \mathcal{A}\vert$ in the following.

\subsection{Upper bound for the second mollified moment and multiplicative energy}

The evaluation of the second moment is a bit more intricate.

\begin{eqnarray}\label{upperM2}
M_2(p) &= &  \sum_{\chi\in X_p^+}\sum_{m_1,m_2 \leq \sqrt{p}} \bar\chi (m_1)\chi(m_2)c_{m_1}c_{m_2} \sum_{n_1,n_2 \geq 1}\chi(n_1^{-1}n_2)e^{-\frac{\pi (n_1^2+n_2^2)x}{p}} \nonumber \\
&=& \frac{p-1}{2} \sum_{m_1,m_2 \leq \sqrt{p}} c_{m_1}c_{m_2}\sum_{m_1 n_1 \equiv\pm m_2 n_2\pmod p \atop (n_1 n_2, p)=1}e^{-\pi (n_1^2+n_2^2)x/p} \nonumber\\
& \ll &  \sum_{m_1,m_2 \leq \sqrt{p}} c_{m_1}c_{m_2}\sum_{n\geq 1} E_{m_1,m_2}(n)e^{-\pi n x/p}
\end{eqnarray} where we used partial summation and 

$$E_{m_1,m_2}(n)= \sum_{n_1^2+n_2^2\leq n\atop{m_1n_1 = \pm m_2n_2 \bmod p}} 1.$$

In \cite{LM}, in order to compute an asymptotic formula for the fourth moment of theta functions, the authors showed that the main contribution comes from the solutions $m_1n_1 = m_2n_2$ and obtained a precise asymptotic formula for the related counting function 
$$\vert\{m_1n_1=m_2n_2, m_1^2+n_1^2+m_2^2+n_2^2 \leq x\}\vert \sim {3\over 8}x\log x.$$  
If we want to improve on this result, we have to minimize the effect of this logarithmic term. Let us recall a related bound which includes all the solutions modulo $p$. Taking initial intervals in \cite[Lemma $1$]{Ayyad}, we have 
\begin{lemma}\label{energy}
Suppose $m_1\leq m_2$ and $x\leq p$, then we have the following bound
$$ \vert \left\{ n_1,n_2 \leq x, m_1n_1 = \pm m_2n_2 \bmod p\right\}\vert \ll \left(1+\frac{x(m_1,m_2)}{m_2}\right)\left(1+\frac{x m_2}{(m_1,m_2)p}\right)   $$
where as usual $(m_1,m_2)$ denotes the greatest common divisor of $m_1$ and $m_2$. 
\end{lemma}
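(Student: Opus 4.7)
The plan is to reduce the congruence to a coprime form and then count lattice points on an arithmetic progression of lines. First I would write $d = (m_1, m_2)$, $m_1 = da$, $m_2 = db$ with $(a,b) = 1$, and note that $a \leq b$ since $m_1 \leq m_2$. Because $d \leq m_2 \leq x \leq p$, we have $(d,p) = 1$ (the degenerate case $d = p$ giving only the trivially small contribution from $n_1, n_2$ divisible by $p$), so the congruence $m_1 n_1 \equiv \pm m_2 n_2 \pmod p$ is equivalent to
$$a n_1 \equiv \pm b n_2 \pmod p.$$

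Next I would rewrite this as a linear Diophantine equation $a n_1 \mp b n_2 = k p$ for some integer $k$ and some choice of sign. Since $n_1, n_2 \leq x$ and $a \leq b$, the left-hand side has absolute value at most $2 b x$, so the admissible values of $k$ (over both signs) form a set of size $O(1 + b x / p)$. For each fixed sign and each such $k$, the integer solutions $(n_1, n_2) \in [1,x]^2$ lie on a single line; solving $n_2 = (\pm a n_1 \mp k p)/b$, the integrality requirement combined with $(a,b) = 1$ forces $n_1$ into a single residue class modulo $b$, giving at most $O(1 + x/b)$ admissible $n_1$, each of which determines $n_2$ uniquely.

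Multiplying the two counts yields
$$\ll \Bigl(1 + \frac{b x}{p}\Bigr)\Bigl(1 + \frac{x}{b}\Bigr) = \Bigl(1 + \frac{x m_2}{(m_1,m_2) p}\Bigr)\Bigl(1 + \frac{x (m_1,m_2)}{m_2}\Bigr),$$
which is the claimed bound.

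The only subtlety I foresee is in the inner count: parameterising by $n_2$ rather than $n_1$ would produce the weaker factor $1 + x/a$, and it is precisely the hypothesis $m_1 \leq m_2$ (equivalently $a \leq b$) that singles out $1 + x/b$ as the sharper alternative, so making the right choice of parameter is essential. Beyond that, I do not anticipate a real obstacle; the argument is elementary lattice-point counting in the spirit of Ayyad's original lemma, specialised to initial intervals $[1,x]$.
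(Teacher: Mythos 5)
Your argument is correct, and it is worth noting that the paper does not actually prove this lemma at all: it simply invokes Lemma~1 of Ayyad, Cochrane and Zheng \cite{Ayyad}, specialised to initial intervals, so your self-contained proof supplies the work that the paper delegates to a reference. What you do --- reduce to $a n_1 \equiv \pm b n_2 \pmod p$ with $d=(m_1,m_2)$, $m_1=da$, $m_2=db$, $(a,b)=1$; count $O(1+bx/p)$ admissible lines $a n_1 \mp b n_2 = kp$; and on each line confine $n_1$ to a single residue class modulo $b$, giving $O(1+x/b)$ points with $n_2$ determined --- is exactly the standard lattice-point argument underlying the cited result, and your remark that the hypothesis $a\le b$ is what selects the sharper factor $1+x/b$ is the right one. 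Two small caveats. First, your justification that $(d,p)=1$ uses the chain $d\le m_2\le x\le p$, but the lemma's hypotheses do not give $m_2\le x$; what actually saves you is that the lemma is only ever applied with $m_1\le m_2\le\sqrt p$, so $d<p$ holds automatically (and some such restriction is genuinely needed: if $p$ divided both $m_1$ and $m_2$, every pair $(n_1,n_2)$ would be a solution and the stated bound could fail). Second, and for the same reason, your parenthetical that the degenerate case contributes only pairs with $n_1,n_2$ divisible by $p$ is not accurate, but since that case is vacuous in the application this is harmless.
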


We deduce immediately the bound

\begin{equation}\label{energybound} E_{m_1,m_2}(n) \ll 1+\frac{n}{p}+\frac{\sqrt{n} m_2}{p(m_1,m_2)}+\sqrt{n}\frac{(m_1,m_2)}{m_2}.
\end{equation}

Truncating the series in (\ref{upperM2}) up to $p^2$, inserting the bound (\ref{energybound}) in (\ref{upperM2}) and using a comparison with an integral, the first three terms in the right hand side of (\ref{energybound}) contribute at most $p\left(\sum_{m\leq \sqrt{p}}c_m\right)^2$.  The last term is more problematic and we can summarize this in the inequality

\begin{equation}\label{M2final}
M_2(p) \ll p\left(\sum_{m\leq \sqrt{p}}c_m\right)^2+ p^{3/2}\sum_{m_1\leq m_2 \leq \sqrt{p}} c_{m_1}c_{m_2}\frac{(m_1,m_2)}{m_2}.
\end{equation}

We recall that we assumed that the weights are all equal to $1$ or $0$ and are totally multiplicative meaning that our mollifier is contructed as the counting function of some multiplicative set $\mathcal{A}$. The first sum on the right hand side of the inequality (\ref{M2final}) gives the harmless contribution to $M_2(p)$
$$ p\left(\sum_{m\leq \sqrt{p}}c_m\right)^2 = p\vert \mathcal{A}\vert ^2.$$ Hence, using Cauchy-Schwarz inequality with (\ref{lowfirst}) and (\ref{M2final}), we deduce the following lower bound
\begin{equation}\label{propbound}
\sum_{\chi \in X_p^+ \atop \theta(x,\chi) \neq 0} 1 \gg  \left(p^2\vert \mathcal{A}\vert ^2\right)/\left(p^{3/2}\sum_{m_1 \leq m_2 \leq \sqrt{p}} c_{m_1}c_{m_2}\frac{(m_1,m_2)}{m_2}\right).
\end{equation}
We need to construct a set $\mathcal{A}$ in order to maximize this ratio.

\section{Good sets minimizing GCD sum}\label{goodset}

Let us consider the general setting of a subset $\mathcal{B}\subset \left[1,N\right]$ of integers. We are interested to minimize the quantity 
$$S(\mathcal{B}):=\sum_{m_1,m_2 \in \mathcal{B} \atop m_1 \leq m_2}\frac{(m_1,m_2)}{m_2}.$$
More precisely, in view of (\ref{propbound}), we want to maximize in terms of $N$ the quantity 

$$R(\mathcal{B}):= N\frac{\vert\mathcal{B}\vert^2}{S(\mathcal{B})}.$$  Restricting the sum to the couples $(m_1,m_2)$ where $m_1 \mid m_2$ we have 
$$S(\mathcal{B}) \geq \sum_{m_1,m_2 \in \mathcal{B}\atop m_1 \mid m_2} \frac{m_1}{m_2} \geq \sum_{m_2 \in \mathcal{B}} \sum_{d \mid m_2}\frac{1}{d} =\sum_{m\in \mathcal{B}} \sigma_{-1}(m) \geq \vert\mathcal{B}\vert.$$

In particular, we have the trivial bound
\begin{equation}\label{gcdupp} R(\mathcal{B}) \leq N\vert \mathcal{B}\vert. \end{equation} 

Assume that $\mathcal{B}$ is a set such that $m_2\in\mathcal{B}$ and $m_1 \mid m_2$ implies $m_1\in\mathcal{B}$, then the divisor sum over $d$ will be complete.  Therefore, we need to construct a set such that, on average, every element has few small divisors. Precisely, we are looking for a set of reasonable density such that the GCD sum $S(\mathcal{B})$ is not too large. 

Let us discuss two extremal cases. Assume  $\mathcal{B}$  to be the set consisting of all the primes less than $N$, we have $S(\mathcal{B}) \ll N/ \log N$. The main contribution comes from the diagonal terms $p=q$, in the other case $p$ and $q$ are coprimes and the sum is small. Thus, by the prime number theorem, $R(\mathcal{B}) \approx N^2 / \log N$. Even though we are able to have good control on the GCD sum, the set of primes has a too small density which prevents us to save any logarithm in $R(\mathcal{B})$. Another extreme case is to take all the integers up to $N$, a short computation shows that the GCD sum $S(\mathcal{B})\approx N\log N$ and again $R(\mathcal{B}) \approx  N^2 / \log N$. This is mainly equivalent as considering the fourth moment of theta functions like in $\cite{LM}$. We thus seek for an intermediate case of an high density set and relatively well controlled GCD sum $S(\mathcal{B})$. 

\subsection{Integers free of small prime factors}

 As we remarked, taking the prime numbers is a good choice in order to have an optimal small GCD sum. Moreover, we can view the primes $\leq N$ as the set obtained after sieving out the first $N^{1/2}$ primes. Our idea is to increase the size of this set sieving out small primes but at the same time control the GCD sum. Practically we will consider the set of numbers free of prime factors smaller than $y$ for some parameter $y$ which will be fixed later. Denote by $P^{-}(n)$ the smalllest prime factor of an integer $n$. Let us write 

$$ \Phi(x,y):=\left\vert \left\{n \leq x : P^{-}(n)> y\right\}\right\vert \hspace{7mm} (x\geq y \geq 2).$$ As for instance proved in \cite[Chapter $15$]{Tencourse}, Brun's sieve implies the asymptotic formula, valid uniformly for $x\geq y \geq 2$,

\begin{equation}\label{asymfree} \Phi(x,y) = \frac{x}{\zeta(1,y)}\left\{1+O\left(\frac{1}{(\log y)^2}\right)\right\}  \hspace{5mm}  (2 \leq y \leq x^{1/10 \log_2 x})\end{equation} where $$\zeta(1,y)=\prod_{p\leq y}(1-1/p)^{-1}$$ is the partial zeta function.
The following simple observation enlights the fact that if we want to construct a set such that two distinct integers have bounded gcd's, we can remove the small prime factors.
\begin{lemma}\label{gcdbound} 
A pair of integers $m \leq n \leq x$ such that $P^{-}(mn) > y$  verifies either $m\mid n$ or $(m,n) < x/y$. 
\end{lemma}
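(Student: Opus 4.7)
The plan is to set $d=(m,n)$ and analyze the cofactor $m'=m/d$. Since $d\mid m$, we may write $m=d\cdot m'$ for some positive integer $m'$, and the goal reduces to showing that if $m\nmid n$, then necessarily $m'>y$, since this gives
\[
d=\frac{m}{m'}<\frac{m}{y}\leq\frac{x}{y}.
\]

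First I would dispose of the case $m'=1$: since $d\mid n$ by definition, $m'=1$ means $m=d\mid n$, which is excluded by the hypothesis $m\nmid n$. Hence $m'\geq 2$, and in particular $m'$ has at least one prime factor.

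The key observation is then that $m'\mid m$, and the hypothesis $P^{-}(mn)>y$ forces $P^{-}(m)>y$, so every prime divisor of $m'$ exceeds $y$. Consequently $m'\geq P^{-}(m')>y$, which completes the proof when combined with the display above and the chain $m\leq n\leq x$. There is no real obstacle: the statement is essentially the observation that a $y$-rough number $m$ has no nontrivial divisors of size $\leq y$, so the "large" factor $m'$ of $m$ must already exceed $y$ whenever it is not equal to $1$. I would expect the write-up to be no more than three or four lines.
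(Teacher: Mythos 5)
Your argument is correct and is essentially the paper's own proof: the paper also notes that if $m\nmid n$ then $(m,n)$ is a proper divisor of $m$, hence divides $m/p$ for some prime $p>y$, giving $(m,n)<m/y\leq x/y$. Your phrasing via the cofactor $m'=m/(m,n)$ and the bound $m'\geq P^{-}(m')>y$ is the same observation written out slightly more explicitly.
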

\begin{proof} Suppose $m \nmid n$, then $(m,n)$ is a proper divisor of $m$. Thus it has to divide $m/p$ for some prime $p >y$ which conludes the proof. \end{proof}

\subsection{Sieve results} \label{sec:sieve}

Let $\mathcal{P}=\left\{p \textrm{ prime such that }p\leq  y\right\}$. Then we have the classical following result.

\begin{lemma}\label{sieve1}
 We define $\mathcal{A}=\left\{1\leq n \leq N: (n,p)=1 \textrm{ for all p } \in \mathcal{P} \right\}$, where $N$ is an arbitrary integer such that $y\leq N.$ There exists an absolute constant $c$ such that  

\begin{equation}\label{upperprodbis} \#A \leq c N \prod_{p\in \mathcal{P}} \left(1-\frac{1}{p}\right).\end{equation}
\end{lemma}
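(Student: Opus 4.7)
The plan is to recognise that $\#\mathcal{A}$ is exactly the sifting function $\Phi(N,y)$ counting integers $n\le N$ all of whose prime factors exceed $y$, and then invoke a standard upper bound sieve. Since $\prod_{p\le y}(1-1/p)\asymp 1/\log y$ by Mertens' theorem, what we actually need to prove is the uniform bound $\Phi(N,y)\ll N/\log y$ throughout the range $y\le N$.

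First I would split into two ranges. In the range $\sqrt N<y\le N$, any $n\in\mathcal{A}$ with $n>1$ satisfies $n\le N<y^2$ and has least prime factor exceeding $y$, so $n$ must itself be prime. Chebyshev's bound then gives $\#\mathcal{A}\le 1+\pi(N)\ll N/\log N\ll N/\log y$, which by Mertens is the claimed inequality. This disposes of the large-$y$ regime cleanly.

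The main case is $2\le y\le\sqrt N$, and here I would apply Selberg's upper bound sieve with sifting level $z=\sqrt N$ to the sequence $\mathcal{B}=\{n:1\le n\le N\}$ and the set of primes $\mathcal{P}=\{p\le y\}$. Selberg's method yields
$$
\Phi(N,y)\le \frac{N}{\displaystyle\sum_{\substack{d\le z\\ p\mid d\Rightarrow p\le y}}\mu^{2}(d)/\varphi(d)} + \sum_{\substack{d_{1},d_{2}\le z\\ p\mid d_{i}\Rightarrow p\le y}}|\lambda_{d_{1}}\lambda_{d_{2}}|,
$$
where the $\lambda_{d}$ are the Selberg weights, each of size $O(1)$. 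A standard manipulation of the denominator shows
$$
\sum_{\substack{d\le z\\ p\mid d\Rightarrow p\le y}}\frac{\mu^{2}(d)}{\varphi(d)}\ge\prod_{p\le y}\Bigl(1+\frac{1}{p-1}\Bigr)=\prod_{p\le y}\Bigl(1-\frac{1}{p}\Bigr)^{-1},
$$
while the remainder sum is $\ll z^{2}=N$, which is absorbed into the main term up to an absolute constant since $\prod_{p\le y}(1-1/p)^{-1}\ll\log y\le\log N$. Combining these two estimates produces the desired inequality (\ref{upperprodbis}) with some absolute $c$.

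The main obstacle here is purely bookkeeping: one must verify that the Selberg error term does not spoil the bound uniformly in $y$, which is why the split into the two ranges $y\le\sqrt N$ and $y>\sqrt N$ is convenient. Alternatively one could quote the Fundamental Lemma of sieve theory directly (e.g.\ from Tenenbaum's book, which is already cited in the paper for (\ref{asymfree})), since that lemma encapsulates precisely this inequality with an explicit absolute constant; if one prefers that route, the proof reduces to a one-line citation, at the cost of not making the method transparent.
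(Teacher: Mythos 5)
The paper itself disposes of this lemma with a one-line citation to Halberstam--Richert (Brun's sieve, Theorem 2.2, Eq.~(5.2), or Selberg's sieve, Theorem 3.5), so your closing remark that one could ``quote the standard sieve theorem directly'' is in fact exactly what the author does. Your attempt to make the Selberg argument explicit is welcome, and your reduction to $\Phi(N,y)\ll N/\log y$ together with the elementary treatment of the range $\sqrt N<y\le N$ (where every element of $\mathcal A$ other than $1$ is prime) is correct. However, the main range $2\le y\le\sqrt N$ contains two genuine errors as written.

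First, the inequality
$\sum_{d\le z,\; p\mid d\Rightarrow p\le y}\mu^{2}(d)/\varphi(d)\ \ge\ \prod_{p\le y}\bigl(1+\tfrac{1}{p-1}\bigr)$
is backwards: the right-hand side is the \emph{complete} sum over all squarefree $y$-smooth $d$, and truncating to $d\le z$ can only decrease it (strictly so whenever $\prod_{p\le y}p>z$, e.g.\ for $y$ near $\sqrt N$). What you actually need is only $G(z)\gg\prod_{p\le y}(1-1/p)^{-1}\asymp\log y$, which does hold, for instance via $G(z)\ge\sum_{d\le y}\mu^{2}(d)/\varphi(d)\ge\sum_{d\le y}\mu^{2}(d)/d\gg\log y$ (using $z\ge y$); so this error is repairable, but the step as stated is false. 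Second, and more seriously, with $z=\sqrt N$ the remainder term $O(z^{2})=O(N)$ is \emph{not} absorbed into the main term: the main term is $N/G(z)\asymp N/\log y$, and $N\not\ll N/\log y$, so your sieve inequality only yields the trivial bound $\Phi(N,y)=O(N)$. You need to take the level smaller, say $z=\sqrt{N}/\log N$ (or $z=N^{1/3}$), so that $z^{2}\ll N/(\log N)^{2}\ll N/\log y$, and then enlarge the elementary large-$y$ range accordingly (integers with at most two prime factors, still $\ll N/\log N$ of them). With those two repairs the argument goes through; as written it does not.
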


\begin{proof} This is an application of Brun's sieve which follows from \cite[Theorem $2.2$, Equation ($5.2$)]{halbrich} or can also be deduced from Selberg's sieve \cite[Theorem $3.5$]{halbrich}. \end{proof}

Sieving with logarithmic weights is very elementary.
\begin{lemma}\label{logsieve}
Assume as before that $N$ is an arbitrary integer such that $y\leq N.$ Then we have
$$\sum_{n \in \mathcal{A}}\frac{1}{n} \ll \log N \prod_{p\in \mathcal{P}} \left(1-\frac{1}{p}\right).$$
\end{lemma}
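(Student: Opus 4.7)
The plan is to combine partial summation with the sieve bound of Lemma \ref{sieve1}. Writing $A(t):=\#\{n\in\mathcal{A}:n\le t\}$, Abel's summation formula gives
$$\sum_{n\in\mathcal{A}}\frac{1}{n}=\frac{A(N)}{N}+\int_1^N\frac{A(t)}{t^2}\,dt.$$
Any $n\in\mathcal{A}$ with $n>1$ has all of its prime factors larger than $y$, so $n>y$; in particular $A(t)=1$ on $[1,y)$. On $[y,N]$ we apply Lemma \ref{sieve1}, which yields $A(t)\ll t\prod_{p\in\mathcal{P}}(1-1/p)$ uniformly in this range.

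Splitting the integral at $t=y$, the piece over $[1,y]$ contributes $\int_1^y t^{-2}\,dt<1$, while the piece over $[y,N]$ contributes
$$\ll \prod_{p\in\mathcal{P}}\Bigl(1-\frac{1}{p}\Bigr)\int_y^N\frac{dt}{t}\le \log N\prod_{p\in\mathcal{P}}\Bigl(1-\frac{1}{p}\Bigr).$$
The boundary term $A(N)/N$ is also absorbed by Lemma \ref{sieve1}. Altogether
$$\sum_{n\in\mathcal{A}}\frac{1}{n}\ll 1+\log N\prod_{p\in\mathcal{P}}\Bigl(1-\frac{1}{p}\Bigr).$$

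The only point that deserves a small check (and is really the only delicate step) is that the constant term is absorbed into the main term. By Mertens' theorem $\prod_{p\le y}(1-1/p)\sim e^{-\gamma}/\log y$, so for $y\le N$ one has
$$\log N\prod_{p\in\mathcal{P}}\Bigl(1-\frac{1}{p}\Bigr)\gg \frac{\log N}{\log y}\ge 1,$$
and the extra $O(1)$ can therefore be dropped, giving the stated bound.
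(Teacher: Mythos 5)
Your proof is correct, but it takes a genuinely different route from the paper. The paper's argument is a one-line Euler product majorization: every $n\le N$ with $P^{-}(n)>y$ is a product of primes in $(y,N]$, so $\sum_{n\in\mathcal{A}}1/n\le\prod_{y<p\le N}(1-1/p)^{-1}$, and Mertens' theorem immediately rewrites this as $\sim e^{\gamma}\log N\prod_{p\le y}(1-1/p)$; no sieve input is needed at all. You instead run the generic conversion of a cardinality bound into a logarithmically weighted bound: partial summation plus the Brun/Selberg upper bound of Lemma \ref{sieve1} applied uniformly on $[y,N]$, with a final Mertens step only to absorb the $O(1)$ from the range $[1,y]$. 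Your route is heavier (it invokes the sieve lemma) but more robust --- it would work for any sifted set for which one only knows a counting upper bound, whereas the paper's argument exploits the specific multiplicative description of $\mathcal{A}$ to avoid sieving entirely. All the steps in your version check out, including the uniformity of Lemma \ref{sieve1} in the upper endpoint and the observation that $\log N\prod_{p\le y}(1-1/p)\gg\log N/\log y\ge 1$, so the constant term is indeed absorbed.
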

\begin{proof}
This is immediate 
\begin{eqnarray*}
 \sum_{n\leq N \atop p\mid n \Rightarrow p>y} \frac{1}{n} &\leq &\prod_{y  < p \leq N} \left(1-\frac{1}{p}\right)^{-1} = \prod_{p\leq N} \left(1-\frac{1}{p}\right)^{-1} \prod_{p\leq y} \left(1-\frac{1}{p}\right) \\
 & \sim & e^{\gamma}\log N \prod_{p \leq y} \left(1-\frac{1}{p}\right)\end{eqnarray*} by Mertens' theorem.
\end{proof}

\subsection{Estimate of $R(\mathcal{B})$ for the set of integers free of small prime factors}

We want to give an upper bound for $R(\mathcal{B})$  in this intermediate setting where $\mathcal{B}$ is the set of integers free of small prime factors less than some parameter $y$. By Lemma \ref{gcdbound}, we know that either $m_1 \mid m_2$ or $(m_1,m_2) < N/y$. 

\subsubsection{Bound in the case $m_1\mid m_2$}
We have
$$\sum_{m_1,m_2 \leq N \atop m_1 \mid m_2} \frac{m_1}{m_2}=\sum_{m_2 \leq N} \sum_{d \mid m_2}\frac{1}{d}.$$ 

We separate the contribution from $d=1$ to the one coming from proper divisors of $m_2$. We notice that proper divisors of $m_2$ are greater than $y$. Thus,

\begin{eqnarray*}
\sum_{m_2 \leq N \atop P^{-}(m_2) > y} \sum_{d \mid m_2}\frac{1}{d} &=& \sum_{m_2 \leq N \atop P^{-}(m_2) > y} 1 + \sum_{m_2 \leq N \atop P^{-}(m_2) > y} \sum_{d \mid m_2 \atop d \neq 1}\frac{1}{d} \\
& \ll & \vert \mathcal{B}\vert + \sum_{y \leq d \leq N} \frac{1}{d} \sum_{k \leq N/d} 1 \\ 
& \ll & \vert \mathcal{B}\vert  + \log N + N/y.
\end{eqnarray*}
For $y$ in an intermediate range, the main contribution comes from $\vert \mathcal{B}\vert $. This gives the first inequality $S(\mathcal{B}) \ll \vert \mathcal{B}\vert$ which is optimal by (\ref{gcdupp}). 

\subsubsection{Contribution from proper divisors}
By Lemma \ref{gcdbound}, we have 
$$ \sum_{m_1,m_2 \in \mathcal{B}}\frac{(m_1,m_2)}{m_2}=\sum_{\delta \leq N/y \atop \delta \in\mathcal{B}} \delta \sum_{m_1,m_2 \in \mathcal{B} \atop (m_1,m_2)= \delta} \frac{1}{m_2} = \sum_{\delta \leq N/y \atop \delta \in\mathcal{B}}\sum_{m_1,m_2 \leq N/\delta \atop m_1,m_2 \in \mathcal{B} ;(m_1,m_2)= 1 } \frac{1}{m_2}.$$

We can forget the coprimality condition, so we have two independent sums over sifted sets. We seek for an upper bound sieve for the double sum over $m_1$ and $m_2$. Noticing that $m_2 \geq y$ if $m\in \mathcal{B}$ and $N/\delta \geq y$, we have using twice Lemma \ref{sieve1} (first for the sum over $m_1$ and then for the sum over $m_2$)

\begin{eqnarray*}
 \sum_{\delta \leq N/y \atop  \delta \in\mathcal{B}}\sum_{m_1\leq m_2 \leq N/\delta \atop m_1,m_2 \in \mathcal{B} ; (m_1,m_2)= 1} \frac{1}{m_2} & \ll & \sum_{\delta \leq N/y \atop \delta \in\mathcal{B} }\sum_{m_2\leq N/\delta \atop m_2 \in \mathcal{B}} \prod_{p\leq y} \left(1-1/p\right) \\
 &\ll & \sum_{\delta \leq N/y \atop \delta \in\mathcal{B}} N/\delta \prod_{p\leq y} \left(1-1/p\right)^2. \end{eqnarray*} We conclude using Lemma \ref{logsieve} to handle the sum over $\delta$ (assuming $N/y \geq y$)

$$ \sum_{\delta \leq N/y \atop \delta \in\mathcal{B}} N/\delta \prod_{p\leq y} \left(1-1/p\right)^2  \ll  N\log N \prod_{p\leq y} \left(1-1/p\right)^3.$$
 
Combining all the previous inequalities, and assuming that the parameter $y$ verifies $\exp({\log^{\epsilon}N})\leq y \leq N^{\epsilon}$, we have

$$S(\mathcal{B}) \ll \max\left\{\vert \mathcal{B}\vert,N\log N \left(\frac{\vert\mathcal{B}\vert}{N}\right)^3\right\}$$ where we used the asymptotic from (\ref{asymfree})

$$ \vert \mathcal{B}\vert \sim N\prod_{p\leq y}\left(1-1/p\right).$$This implies

$$R(\mathcal{B}) \gg \min\left\{N\vert\mathcal{B}\vert,\frac{N^3}{\vert \mathcal{B}\vert \log N}\right\}.$$We can rewrite this in a more pleasant way. Setting $\vert \mathcal{B}\vert = \alpha N$, we have

\begin{equation}\label{finalbound} R(\mathcal{B}) \gg \min\left\{ N^2 \alpha, \frac{N^2}{\alpha \log N}\right\}\end{equation} leading to the optimal choice $\alpha = \frac{1}{\sqrt{\log N}}$.  

\section{Proof of Theorem \ref{mainth}}\label{ProofTh}

In our context, we have $N=\sqrt{p}$ and we can choose $\mathcal{A}$ in (\ref{weight}) as the set of numbers having all their prime factors greater than $\exp({\sqrt{\log p}})$. Using the asymptotical formula (\ref{asymfree}) and Mertens' theorem, we have $\vert\mathcal{A}\vert \sim \frac{\sqrt{p}}{\sqrt{\log p}}$.  Thus inserting in (\ref{finalbound}),
$$ R(\mathcal{A}) \gg N^2 \alpha = \frac{p}{\sqrt{\log p}}.$$ This leads to the following lower bound on the proportion of non vanishing  
 $$ M_1(p)^2/M_2(p) \gg \frac{p}{\sqrt{\log p}}, $$ which concludes the proof by (\ref{Cauchy}).

\section{The case of odd characters}\label{odd}

If $\chi$ mod $p$ is odd, 
then we set
$$\theta (x,\chi)
=\sum_{n\geq 1} n\chi (n)e^{-\pi n^2x/p}$$
and we have the relations
$$\sum_{\chi\in X_p^-}\chi (a)\bar\chi (b)
=\cases{
(p-1)/2&if $b\equiv a\pmod p$ and $\gcd (a,p)=1$,\cr
-(p-1)/2&if $b\equiv -a\pmod p$ and $\gcd (a,p)=1$,\cr
0&otherwise,\cr}$$
where $X_p^-$ is the set of the $(p-1)/2$ odd Dirichlet characters mod $p\geq 3$. 

Using a similar method and partial summation, we can show that $\theta (x,\chi)\neq 0$ 
for at least $\gg p/\sqrt{\log p}$ of the $(p-1)/2$ characters $\chi\in X_p^-$.
 
\section{Combinatorial open questions and consequences}\label{combiproblems}
 
To summarize, we constructed a set $\mathcal{B}\subset [1,N]$ of density $\alpha=\frac{1}{\sqrt{\log N}}$ such that $S(\mathcal{B}) \ll \vert \mathcal{B}\vert$ or in another words the multiplicative energy (as defined in \cite{Gowers4,Taogroup,TaoVu}) verifies
 
 $$ E_{\times}(\mathcal{B},N):=\vert \left\{ ab=cd, 1 \leq b,d \leq N, a,c \in \mathcal{B}\right\} \ll N \vert \mathcal{B}\vert.$$ This is of course optimal in terms of the size of the sets up to the constant. 
  We address the following problem
\begin{question} What is the maximal $0 < \alpha < 1$ (in terms of $N$) such that there exists a set $\mathcal{B}$ of density $\alpha$ verifying $ E_{\times}(\mathcal{B},N)\ll N \vert \mathcal{B}\vert$? \end{question}
 
\vspace{2mm} Our previous discussion shows that we can take $\alpha=\frac{1}{\sqrt{\log N}}$. We can ask the maybe easier question of constructing a set of density $\frac{1}{(\log N)^{\beta}}$ with $\beta  < 1/2$ having this property of minimizing the multiplicative energy. In order to get any improvement of our main result, the actual refined question would be sufficient
\begin{question} Can we construct a set $\mathcal{B}$ of density $\frac{1}{(\log N)^{\alpha}}$ such that $$ E_{\times}(\mathcal{B},N)\ll N \vert \mathcal{B}\vert (\log N)^{\beta} \hspace{4mm}\textrm{ with }\alpha + \beta < 1/2.$$   \end{question}
 Let us address the following related problem which might not have applications in the vanishing of theta functions but that we found interesting in its own right. Solymosi obtained in \cite{Solymosi} a beautiful upper bound $ E_{\times}(\mathcal{B},\mathcal{B}) \ll \vert\mathcal{B}+\mathcal{B}\vert^2 \log\vert \mathcal{B}\vert$ on the multiplicative energy of a set of reals. It implies particularly that it is good to choose a set with very small sumset in order to minimize the energy. We want to address the question whether we can get rid of this logarithmic term for sufficiently high-density sets of integers.
 \begin{question}What is the maximal $0 < \alpha < 1$ (in terms of $N$) such that there exists a set $\mathcal{B}$ of density $\alpha$ such that $ E_{\times}(\mathcal{B},\mathcal{B})\ll \vert \mathcal{B}\vert^2$? \end{question}

\section*{Acknowledgements}
The author would like to thank St\'{e}phane Louboutin and Igor Shparlinski for their comments on a first version of the draft as well as the latter one for pointing him out that similar ideas where used in \cite{Burgessrefine}. The author is supported by the Austrian Science Fund (FWF) project Y-901.

\end{document}